\documentclass[a4paper,12pt]{amsart}
\usepackage{amsmath,amsthm,amssymb}
\usepackage{mathrsfs}
\usepackage{mathtools}
\usepackage{ifthen}
\usepackage{booktabs}
\usepackage{caption}
\nonstopmode \numberwithin{equation}{section}
\newtheorem{thm}{Theorem}

\newtheorem{cor}{Corollary}
\newtheorem{lem}{Lemma}

\newtheorem{conj}{Conjecture}

\theoremstyle{definition}
\newtheorem{defn}{Definition}[section]

\newtheorem{prob}[equation]{Problem}

\newenvironment{rem}{%
\bigskip
\noindent \textsl{{\sl Remark. }}}{\bigskip}
\newenvironment{rems}{%
\bigskip
\noindent \textsl{{\sl Remarks. }}}{\bigskip}

\newcounter {own}
\def\theown {\thesection       .\arabic{own}}

\newenvironment{pf}[1][]{%
 \vskip 3mm
 \noindent
 \ifthenelse{\equal{#1}{}}%
  {{\slshape Proof. }}%
  {{\slshape #1.} }%
 }%
{\qed\bigskip}

\newcounter{alphabet}

\newcommand{\A}{{\mathcal A}}

\newcommand{\IC}{{\mathbb C}}

\newcommand{\D}{{\mathbb D}}

\def\be{\begin{equation}}
\def\ee{\end{equation}}

\newcommand{\bee}{\begin{enumerate}}
\newcommand{\eee}{\end{enumerate}}

\newcommand{\blem}{\begin{lem}}
\newcommand{\elem}{\end{lem}}
\newcommand{\bthm}{\begin{thm}}
\newcommand{\ethm}{\end{thm}}
\newcommand{\bcor}{\begin{cor}}
\newcommand{\ecor}{\end{cor}}
\newcommand{\beg}{\begin{examp}}
\newcommand{\eeg}{\end{examp}}
\newcommand{\begs}{\begin{examples}}
\newcommand{\eegs}{\end{examples}}
\newcommand{\bdefe}{\begin{defn}}
\newcommand{\edefe}{\end{defn}}
\newcommand{\bprob}{\begin{prob}}
\newcommand{\eprob}{\end{prob}}
\newcommand{\bei}{\begin{itemize}}
\newcommand{\eei}{\end{itemize}}

\newcommand{\bcon}{\begin{conj}}
\newcommand{\econ}{\end{conj}}
\newcommand{\bcons}{\begin{conjs}}
\newcommand{\econs}{\end{conjs}}
\newcommand{\bprop}{\begin{propo}}
\newcommand{\eprop}{\end{propo}}
\newcommand{\br}{\begin{rem}}
\newcommand{\er}{\end{rem}}
\newcommand{\brs}{\begin{rems}}
\newcommand{\ers}{\end{rems}}
\newcommand{\bo}{\begin{obser}}
\newcommand{\eo}{\end{obser}}
\newcommand{\bos}{\begin{obsers}}
\newcommand{\eos}{\end{obsers}}
\newcommand{\bpf}{\begin{pf}}
\newcommand{\epf}{\end{pf}}
\newcommand{\ba}{\begin{array}}
\newcommand{\ea}{\end{array}}
\newcommand{\beq}{\begin{eqnarray}}
\newcommand{\beqq}{\begin{eqnarray*}}
\newcommand{\eeq}{\end{eqnarray}}
\newcommand{\eeqq}{\end{eqnarray*}}

\newcounter{minutes}
\divide\time by 60
\newcounter{hours}
\multiply\time by 60 \addtocounter{minutes}{-\time}
\begin{document}
\title{Radius of convexity of certain classes of functions defined by convolution}
\begin{center}
{\tiny \texttt{FILE:~\jobname .tex,
        printed: \number\year-\number\month-\number\day,
        \thehours.\ifnum\theminutes<10{0}\fi\theminutes}
}
\end{center}
\author{Bappaditya Bhowmik${}^{~\mathbf{*}}$}
\address{Bappaditya Bhowmik, Department of Mathematics,
Indian Institute of Technology Kharagpur, Kharagpur - 721302, India.}
\email{bappaditya@maths.iitkgp.ac.in}
\author{Souvik Biswas}
\address{Souvik Biswas, Department of Mathematics, Indian Institute of Technology Kharagpur, Kharagpur - 721302, India.}
\email{souvikbiswas158@gmail.com}

\subjclass[2020]{30C55, 30C45} \keywords{Convolution, convex functions, concave functions, starlike functions, close-to-convex functions, radius of convexity}

\begin{abstract}
   Let $\mathcal{S}$ be the class of analytic univalent functions defined in the open unit disc $\D$ of the complex plane with the normalizations $f(0)=0$ and $f'(0)=1$. For $A\in (1,2]$, let $Co(A)$ denote the class of concave univalent functions defined in $\D$ with the opening angle $\pi A$ at infinity. In this article, by applying certain convolution techniques, we investigate the radius of convexity for the class $Co(A)\ast\mathcal{S}t(1/2)$, where $\mathcal{S}t(1/2)\subsetneq \mathcal{S}$ denotes the class of starlike functions of order $1/2$. Furthermore, we establish that the radius of convexity of the class $\mathcal{S}\ast\mathcal{S}t(1/2)$ is at least $0.19191$ (approximately). Here, `$\ast$' denotes the convolution (or Hadamard product) of two classes of functions.
\end{abstract}

\maketitle
\pagestyle{myheadings}
\markboth{B. Bhowmik and S. Biswas}{}
\maketitle
\pagestyle{myheadings}
\markboth{B. Bhowmik and S. Biswas }{On the radius of convexity of $Co(A)\ast\mathcal{S}t(1/2)$ and $\mathcal{S}\ast\mathcal{S}t(1/2)$}

\bigskip
\bigskip

\section{Introduction and Preliminaries}\label{P3sec1}
Let $\mathbb{C}$ be the complex plane and $\D:=\{z\in\mathbb{C} : |z|<1\}$ be the open unit disc in $\mathbb{C}$. We denote the unit circle by $\partial \D=\{z\in\mathbb{C} : |z|=1\}$. We consider the class $\mathcal{A}$ consisting of all analytic functions in $\D$ that satisfy the normalization $f(0)=0=f'(0)-1$. Let $\mathcal{S}$ denote the class of univalent functions in $\mathcal{A}$. Over the years, researchers have investigated various subclasses of $\mathcal{S}$ with specific geometric properties. Among these, the most notable are the class of convex functions of order $\beta$, $0\leq \beta<1$, defined as
$$
  \mathcal{C}(\beta)=\{f\in \mathcal{A} : {\rm{Re}}\,\left(1+\frac{zf''(z)}{f'(z)}\right)>\beta, \quad z\in\D\},
$$
and the class of starlike functions of order $\beta$, $0\leq \beta<1$, defined as 
$$
  \mathcal{S}t(\beta)=\{f\in \mathcal{A} : {\rm{Re}}\,\left(\frac{zf'(z)}{f(z)}\right)>\beta, \quad z\in \D\}.
$$
We denote the class of all convex functions by $\mathcal{C}:=\mathcal{C}(0)$, which consists of all functions $f\in\mathcal{S}$ such that $f$ maps $\D$ conformally onto a convex domain. Similarly, the class of all starlike functions is denoted by $\mathcal{S}t:=\mathcal{S}t(0)$, which consists of all functions $f \in \mathcal{S}$ such that $f$ maps $\D$ conformally onto a starlike domain with respect to the origin. Since every convex function is starlike, a function which is convex in $\D$ must satisfy ${\rm{Re}}\,\left(zf'(z)/f(z)\right)>0$, $z\in\D$. In fact, for $f\in \mathcal{C}$, this condition can be improved to ${\rm{Re}}\,\left(zf'(z)/f(z)\right)>1/2$ for $z\in\D$ (see \cite{str}). In other words, $\mathcal{C}\subsetneq \mathcal{S}t(1/2)$. We recall that $f\in\A$ is said to be a close-to-convex function if there exists $g \in \mathcal{C}$ such that 
$$
 {\rm{Re}}\,\left(\frac{f'(z)}{g'(z)}\right)>0, \quad z\in\D.
$$
It is well-known that $\mathcal{C}\subsetneq \mathcal{S}t(1/2)\subsetneq \mathcal{S}t\subsetneq \mathcal{K}\subsetneq \mathcal{S}$. In this article, we address the classical radius problem in geometric function theory. The {\it{radius of convexity}} ({\it{starlikeness}}, respectively) of a subclass $\A_1$ of $\A$ is the largest number $r\in (0,1]$ such
that every function $f\in \A_1$ is convex (starlike, respectively) in $\D_r=\{z\in \IC: |z|<r\}$. In $1920$, Nevanlinna (\cite{nev}) proved that the radius of convexity of $\mathcal{S}$ is $2-\sqrt{3}$. Later in $1934$, Grunsky (see \cite[p.~141]{grunsky}) showed that the radius of starlikeness of $\mathcal{S}$ is $\tanh{\pi/4}$. Let $Co(A)$ be the class of functions $f$ that satisfy the following conditions:
 \begin{itemize}
     \item[(i)] $f\in\mathcal{S}$ with the additional condition $f(1)=\infty$.
     \item[(ii)] $\mathbb{C}\setminus f(\D)$ is convex.
     \item[(iii)] The opening angle of $f(\D)$ at $\infty$ is less than or equal to $\pi A$, $A\in (1,2]$.
 \end{itemize}
This class is called the class of concave univalent functions with opening angle $\pi A$, $A\in(1,2]$, at infinity. In \cite[Theorem~2]{fg}, Avkhadiev and Wirths proved that $f \in Co(A)$ if and only if
$$
 {\rm{Re}}\,T_f(z)>0, \quad z \in \D,
$$
where, $f(0)=0=f'(0)-1$ and
\begin{equation}\label{P3eq1.3}
T_f(z):=\frac{2}{A-1}\left[\frac{A+1}{2}\left(\frac{1+z}{1-z}\right)-1-z\frac{f''(z)}{f'(z)}\right].
\end{equation}
For more details about this class, we urge readers to go through the articles \cite{av, fg, bhowmik, charac}. In order to describe the main goal of this article, we need to be familiar with the concept of convolution (or Hadamard product). The convolution of two functions $f,g\in \mathcal{A}$ with the power series expansions $f(z)=\sum_{n=0}^\infty a_nz^n$ and $g(z)=\sum_{n=0}^\infty b_nz^n$ is defined as 
$$
 (f\ast g)(z)=\sum_{n=0}^\infty a_n b_n z^n\in \A.
$$
In \cite{sheil}, Ruscheweyh and Sheil-Small proved the P\'olya-Schoenberg conjecture which states that the class $\mathcal{C}$ is closed under convolution, i.e. $\mathcal{C}\ast \mathcal{C}\subseteq \mathcal{C}$. Furthermore, in \cite{sheil}, it was obtained that the class $\mathcal{S}t (1/2)$ is closed under convolution, and $\mathcal{C}\ast \mathcal{K} \subseteq \mathcal{K}$.
In \cite{small}, it is shown that for $f,g\in \mathcal{S}t$, $f\ast g$ need not be in $\mathcal{S}$. Later, in $1997$, Y. Ling and S. Ding (see \cite{ling}) obtained the radius of starlikeness and convexity of the class $\mathcal{S}t \ast \mathcal{S}t$ as $2-\sqrt{3}$ and $5-2\sqrt{6}$, respectively. In $2003$, Richard Greiner and Oliver Roth proved that the radius of convexity of the class $\mathcal{S} \ast \mathcal{K}$ is $5-2\sqrt{6}$ (see \cite[Theorem~2.1]{roth}) and conjectured that the radius of convexity of the class $\mathcal{S} \ast \mathcal{S}$ is $5-2\sqrt{6}$ (see \cite[Conjecture~$2.3$]{roth}), which is still open. In this article, we explore convolution of functions in $Co(A)$ with $\mathcal{C}$ and $\mathcal{S}t (1/2)$. Since $\mathcal{C} \ast \mathcal{C} \subseteq \mathcal{C}$ and the radius of convexity of $Co(A)$ is $A-\sqrt{A^2-1}$ (see \cite[Corollary~2.13]{bhowmik}), it is readily seen that every function in $Co(A)\ast\mathcal{C}$ is convex in $|z|<A-\sqrt{A^2-1}$. This is best possible which can be seen as follows. If we consider
$$
  f(z)=\frac{1}{2A}\left[\left(\frac{1+z}{1-z}\right)^A-1\right]\in Co(A), ~ A\in(1,2] \quad {\rm{and}} \quad g(z)=\frac{z}{1-z}\in\mathcal{C},
$$
then $(f\ast g)(z)=f(z)$, which is convex in $|z|<A-\sqrt{A^2-1}$ but fails to be convex in any larger disk (see \cite[Corollary~2.13]{bhowmik}). Next, we see that a lower bound for the radius of convexity of the class $Co(A)\ast \mathcal{S}t(1/2)$ is $\left(A-\sqrt{A^2-1}\right)\sqrt{2\sqrt{3}-3}$. This can be seen by noting that $\mathcal{C}\ast \mathcal{C} \subseteq \mathcal{C}$, together with the fact that the radii of convexity of the classes $Co(A)$ and $\mathcal{S}t(1/2)$ are $A-\sqrt{A^2-1}$ and $\sqrt{2\sqrt{3}-3}$ (see \cite[Theorem~1]{mac}), respectively. In this article (Theorem~\ref{P3thm1}), using some specific convolution techniques, we obtain an improved lower bound for the radius of convexity of the class $Co(A)\ast \mathcal{S}t(1/2)$ for $A\in[1.370794,2]$. Furthermore, we consider a more general class $\mathcal{S}\ast \mathcal{S}t(1/2)$ and investigate its radius of convexity. It is important to note that the radius of convexity of the class $\mathcal{S} \ast \mathcal{S}t(1/2)$ is at most $2-\sqrt{3}\approx 0.267949$. This can be seen by noting that $(k\ast g)(z)=k(z)$ is convex in $|z|<2-\sqrt{3}$ but fails to be convex in any larger disk, where
$$
 k(z)=\frac{z}{(1-z)^2}\in\mathcal{S} \quad {\rm{and}} \quad g(z)=\frac{z}{1-z}\in\mathcal{S}t(1/2).
$$
In $2003$, Richard Greiner and Oliver Roth proved that the radius of convexity of the class $\{(1-\lambda)f(z)+\lambda zf'(z):f\in\mathcal{S}\}$ for fixed $\lambda\in\mathbb{C}$ with $\left|2\lambda-1\right|\leq 1$ is $5-2\sqrt{6}$ (see \cite[Theorem~1.2]{roth}). As an application of this result, the authors obtained that the radius of convexity of $\mathcal{S}\ast\mathcal{K}$ is $5-2\sqrt{6}$ (see \cite[Theorem~2.1]{roth}). Since $\mathcal{S}t(1/2)\subsetneq \mathcal{K}$, it then follows that every function in $\mathcal{S}\ast\mathcal{S}t(1/2)$ maps $|z|<5-2\sqrt{6}\approx 0.10102$ onto a convex domain. By a direct computation, we see that a lower bound for the radius of convexity of the class $\mathcal{S} \ast \mathcal{S}t(1/2)$ is $(2-\sqrt{3})\sqrt{2\sqrt{3}-3} \approx 0.18254$. This can be seen by noting that $\mathcal{C}\ast \mathcal{C} \subseteq \mathcal{C}$ together with the fact that the radii of convexity of the classes $\mathcal{S}$ and $\mathcal{S}t(1/2)$ are $2-\sqrt{3}$ and $\sqrt{2\sqrt{3}-3}$, respectively. In this article (Theorem~\ref{P3thm2}), we obtain an improved lower bound for the radius of convexity of the class $\mathcal{S} \ast \mathcal{S}t(1/2)$, which is approximately $0.19191$.
We must emphasize here that, the technique used to prove Theorems~\ref{P3thm1} and \ref{P3thm2} of this article is completely different than the one which was used by Richard Greiner and Oliver Roth to obtain the radius of convexity for the class $\mathcal{S} \ast \mathcal{K}$ in \cite[Theorem~2.1]{roth}.

\section{Radius of convexity of  $Co(A)\ast \mathcal{S}t(1/2)$}

We need Lemmas~\ref{P3lem1}, \ref{P3lem2} and \ref{P3lem3} in order to establish the main result of this section.
\begin{lem}\label{P3lem1}
     Let $\phi$ and $g$ be analytic functions defined in $\D$ and satisfy $\phi(0)=0=g(0)$ with $\phi'(0)\ne 0$, $g'(0)\ne 0$. If for each complex number $\sigma \in \partial \D$ and $\alpha \in \partial \D$,
     \begin{equation}\label{P3eq2.1}
          \phi(z) \ast \left(\frac{R_1+\alpha \sigma Rz}{R_1-\sigma Rz} g(Rz)\right) \ne 0, \quad 0<|z|<1,
     \end{equation}
    where $0<R\leq R_1<1$, then 
    $$
     {\rm{Re}}\, \left(\frac{\phi(z) \ast \left(\frac{R_1+\sigma Rz}{R_1-\sigma Rz}\,g(Rz)\right)}{\phi(z)\ast g(Rz)}\right)>0, \quad z\in\D.
    $$
\end{lem}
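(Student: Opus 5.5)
Let $F(z):=\phi(z)\ast g(Rz)$ and, for $\sigma\in\partial\D$,
$$
G_\sigma(z):=\phi(z)\ast\left(\frac{R_1+\sigma Rz}{R_1-\sigma Rz}\,g(Rz)\right),\qquad H_\sigma(z):=\phi(z)\ast\left(\frac{2\sigma Rz}{R_1-\sigma Rz}\,g(Rz)\right).
$$
The plan is a standard continuity/argument-principle argument, set up so that hypothesis \eqref{P3eq2.1} rules out the quotient $G_\sigma/F$ ever touching the imaginary axis. Convolution is linear, and
$$
\frac{R_1+\alpha\sigma Rz}{R_1-\sigma Rz}=\frac{1-\alpha}{2}+\frac{1+\alpha}{2}\cdot\frac{R_1+\sigma Rz}{R_1-\sigma Rz}.
$$
Hence the left side of \eqref{P3eq2.1} equals
$$
\frac{1-\alpha}{2}F(z)+\frac{1+\alpha}{2}G_\sigma(z).
$$

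First I would show that $F$ has no zeros in $0<|z|<1$ and a simple zero at $0$. Taking $\alpha=-1$ in \eqref{P3eq2.1} gives $F(z)\neq0$ for $0<|z|<1$. At the origin, $F'(0)=\phi'(0)g'(0)R\neq0$ and $G_\sigma'(0)=\phi'(0)g'(0)R\neq0$, since the factor $(R_1+\sigma Rz)/(R_1-\sigma Rz)$ equals $1$ at $z=0$. So $Q_\sigma:=G_\sigma/F$ extends analytically to $\D$ with $Q_\sigma(0)=1$.

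Next, suppose $\mathrm{Re}\,Q_\sigma(z_0)\le 0$ for some $z_0\in\D$. Since $Q_\sigma(0)=1$, by continuity along the segment from $0$ to $z_0$ there is a point $z_1\ne0$ with $Q_\sigma(z_1)=it$ for some real $t$. Now choose $\alpha\in\partial\D$ so that
$$
\frac{1-\alpha}{2}+\frac{1+\alpha}{2}\,it=0,
$$
that is, $\alpha=\dfrac{1+it}{1-it}$ (when $t=\pm\infty$ is excluded). This has modulus $1$ because $t$ is real. Dividing the expression for \eqref{P3eq2.1} by $F(z_1)\neq0$, the left side of \eqref{P3eq2.1} vanishes at $z_1$, which contradicts the hypothesis. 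Hence $\mathrm{Re}\,Q_\sigma>0$ throughout $\D$, which is the conclusion.

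Two points need checking, and the main obstacle is ensuring that $Q_\sigma$ is genuinely analytic on all of $\D$. The first is $\alpha\neq-1$: if $\alpha=-1$ we would need $F(z_1)=0$, which is already excluded, so the algebra is consistent. The second is that $(R_1+\sigma Rz)/(R_1-\sigma Rz)$ has no pole in $\D$: since $R\le R_1$, its pole lies at $|z|=R_1/R\ge1$. So each $\frac{R_1+\alpha\sigma Rz}{R_1-\sigma Rz}\,g(Rz)$ is analytic in $\D$, the convolutions are well defined there, and the argument closes.
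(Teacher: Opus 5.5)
Your proof is correct and follows the paper's proof. Both use the same linear decomposition in $\alpha$, take $\alpha=-1$ to get nonvanishing of $\phi\ast g(Rz)$ on $0<|z|<1$, and observe that as $\alpha$ ranges over $\partial\D\setminus\{-1\}$ the excluded values $-(1-\alpha)/(1+\alpha)$ cover the whole imaginary axis. You also make explicit two steps the paper leaves implicit: the removable singularity at $0$ giving $Q_\sigma(0)=1$, and the continuity argument along a segment.
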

\begin{pf}
     We first observe that if $\alpha=-1$, then from \eqref{P3eq2.1} we get
    $$
     \phi(z) \ast g(Rz) \ne 0, \quad 0<|z|<1.
    $$
    For each complex number $\sigma \in \partial \D$ and $\alpha \in \partial \D$, we have
    \begin{align*}
         &\phi(z) \ast \left(\frac{R_1+\alpha \sigma Rz}{R_1-\sigma Rz}\,g(Rz)\right)\\=& \left(\frac{1+\alpha}{2}\right)\phi(z) \ast \left(\frac{R_1+\sigma Rz}{R_1-\sigma Rz}\,g(Rz)\right)
         +\left(\frac{1-\alpha}{2}\right) \phi(z) \ast g(Rz), \quad z\in \D.
    \end{align*}
   Dividing the above equation by $\phi(z) \ast g(Rz)$, we get
   $$
     \left(\frac{1+\alpha}{2}\right)\frac{\phi(z) \ast \left(\frac{R_1+\sigma Rz}{R_1-\sigma Rz}\,g(Rz)\right)}{\phi(z)\ast g(Rz)}=\frac{ \phi(z) \ast \left(\frac{R_1+\alpha \sigma Rz}{R_1-\sigma Rz}\,g(Rz)\right)}{\phi(z) \ast g(Rz)}-\frac{1-\alpha}{2},\quad 0<|z|<1.
   $$
   If we assume $\alpha \ne -1$, then by \eqref{P3eq2.1}, and, from the above equation we get
    $$
     \frac{\phi(z) \ast \left(\frac{R_1+\sigma Rz}{R_1-\sigma Rz}\,g(Rz)\right)}{\phi(z)\ast g(Rz)} \ne -\frac{1-\alpha}{1+\alpha}, \quad 0<|z|<1.
    $$
    Thus, the function in the left-hand side of the above inequation does not take any value on the imaginary axis, but, clearly has the value $1$ at $z=0$. This proves the lemma.
\end{pf}

\begin{lem}\label{P3lem2}
    Let $h$ be an analytic function in $\mathbb{D}$ with $h(0)=0$, and there exists $\beta\in \partial \D$ such that 
    $$
     {\rm{Re}}\,\left((R_1-\beta z)\frac{h(z)}{z}\right)>0, \quad |z|<R,
    $$
    where $0<R\leq R_1<1$. Then for each $\phi \in \mathcal{S}t(1/2)$,
    $$
     \phi(z) \ast h(Rz) \ne 0, \quad 0<|z|<1.
    $$
\end{lem}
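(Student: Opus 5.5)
The plan is to apply the Ruscheweyh--Sheil-Small convolution theorem for the class $\mathcal{S}t(1/2)$. The form we need is this: if $\phi\in\mathcal{S}t(1/2)$ and $F$ is analytic in $\D$ with ${\rm Re}\,F>0$, then
$$
\frac{\phi \ast (\psi F)}{\phi\ast\psi}
$$
takes its values in the closed convex hull of $F(\D)$, where $\psi$ is a suitable companion function. Equivalently, we use the duality statement that $\phi\ast q\neq 0$ in $0<|z|<1$ for every $\phi\in\mathcal{S}t(1/2)$ as soon as $q(z)=z\,p(z)/(1-xz)$ with ${\rm Re}\,p>0$ and $|x|\le 1$. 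The latter follows because $z/(1-xz)$ is convex, so $\mathcal{S}t(1/2)\ast\mathcal{C}\subseteq\mathcal{S}t(1/2)$ and the Herglotz representation give $\phi\ast(zp/(1-xz))=\int (\phi\ast z/(1-yz))\,d\mu$-type expressions which stay away from $0$. Concretely, I will use the known fact (from \cite{sheil}) that for $\phi\in\mathcal{S}t(1/2)$ and $w\in\D$, $w\neq 0$, the functional $q\mapsto (\phi\ast q)(w)/w$ is represented on functions of the form $z\,p(z)$, ${\rm Re}\,p>0$, $p(0)=1$, by an element of the closed convex hull, and hence is nonzero.

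\textbf{Step 1 (rewrite $h$).} Put $H(z)=h(Rz)$ and $\beta'=\beta R/R_1$. Then $|\beta'|\le 1$, and the hypothesis gives
$$
p(z):=\frac{(R_1-\beta R z)\,h(Rz)}{R_1\,Rz}\quad\text{with}\quad {\rm Re}\,p(z)>0,\ z\in\D,
$$
after multiplying by the positive constant $1/(R_1R)$. Hence
$$
h(Rz)=\frac{R\,z}{1-\beta' z}\,p(z).
$$
The normalization $p(0)=h'(0)$ can be arranged up to a positive factor. If $h'(0)=0$ then $p(0)=0$, which is impossible since ${\rm Re}\,p>0$.

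\textbf{Step 2 (reduce to starlike convolution).} Since $\ell(z)=z/(1-\beta'z)$ is convex (or the identity $z$ when $\beta'=0$), the Ruscheweyh--Sheil-Small theorem gives $\phi\ast\ell\in\mathcal{S}t(1/2)$. Then
$$
\phi\ast h(Rz)=R\,(\phi\ast\ell)\ast\bigl(z\,p(z)\bigr)\cdot\frac{1}{\text{suitable}}.
$$
More carefully, I apply the convex-hull form: with $\Phi=\phi\ast\ell\in\mathcal{S}t(1/2)\subseteq\mathcal{S}t$, we have
$$
\frac{\Phi\ast(\ell p)}{\Phi\ast\ell}\in\overline{\rm co}\,p(\D),
$$
where $\Phi$ is replaced by $\phi$ and $\ell$ is convex. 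This is exactly the Ruscheweyh--Sheil-Small lemma with $\phi$ convex, $\ell$ starlike of order $1/2$, roles arranged so that one of them is convex. I will therefore take $\ell$ as the convex factor and $\phi\in\mathcal{S}t(1/2)$ as the other; the theorem states that for $\phi$ convex and $g$ starlike, $\phi\ast(gF)/(\phi\ast g)$ lies in the convex hull of $F(\D)$. Here the natural choice is the convex function $\ell$ playing the role of $g$ (convex implies starlike) and $\phi$ the role of the first factor. Since $\phi$ is only in $\mathcal{S}t(1/2)$, I will use the $\mathcal{S}t(1/2)$ version of the theorem (also in \cite{sheil}): for $\phi\in\mathcal{S}t(1/2)$ and $g\in\mathcal{S}t(1/2)$, $\phi\ast(gF)/(\phi\ast g)\in\overline{\rm co}F(\D)$, and $\ell\in\mathcal{C}\subset\mathcal{S}t(1/2)$.

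\textbf{Step 3 (conclude).} Since ${\rm Re}\,p>0$, the convex hull of $p(\D)$ lies in the open right half plane, so $\phi\ast(\ell p)\neq0$ wherever $\phi\ast\ell\neq0$. Moreover $\phi\ast\ell\in\mathcal{S}t(1/2)$ is univalent with zero only at $0$. Hence $\phi(z)\ast h(Rz)=R\,\phi\ast(\ell p)\neq0$ for $0<|z|<1$.

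The main obstacle is pinning down the exact version of the convex-hull theorem valid for two functions of class $\mathcal{S}t(1/2)$, and checking the degenerate case $|\beta'|=1$, which occurs when $R=R_1$. In that case $\ell$ is still convex, so no change is needed.
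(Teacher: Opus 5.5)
Your argument is essentially correct, but it takes a more black-box route than the paper. You factor $h(Rz)=\ell(z)\,p(z)$ with $\ell(z)=z/(1-\beta' z)\in\mathcal{C}\subset\mathcal{S}t(1/2)$ and ${\rm Re}\,p>0$. You then apply the Ruscheweyh--Sheil-Small convex hull theorem for a pair of functions in $\mathcal{S}t(1/2)$, with $g=\ell$ and $F=p$. This version of the theorem does exist: in Ruscheweyh's monograph it is the case $\alpha=1/2$ of the convex hull theorem for prestarlike functions, since the prestarlike class of order $1/2$ is $\mathcal{S}t(1/2)$.

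The paper instead unpacks that theorem by hand:
\begin{itemize}
\item It writes $p(z)/p(0)=\int_{\partial\D}\frac{1+\gamma\sigma z}{1-\sigma z}\,d\mu(\sigma)$ with $\gamma=\overline{p(0)}/p(0)\in\partial\D$. This normalization is needed because $p(0)=h'(0)$ need not be real.
\item It splits $\frac{1+\gamma\sigma z}{1-\sigma z}=\frac{1+\gamma}{1-\sigma z}-\gamma$ and factors out $\phi\ast\ell=\phi(\beta'z)/\beta'$.
\item It then uses only the extreme-point inequality ${\rm Re}\,\frac{\phi\ast(\ell(z)(1-\sigma z)^{-1})}{(\phi\ast\ell)(z)}>\frac12$ from p.~123 of Ruscheweyh--Sheil-Small. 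Averaging gives a $W$ with ${\rm Re}\,W>\frac12$, while $\gamma/(1+\gamma)$ has real part exactly $\frac12$, so $(1+\gamma)W-\gamma\neq 0$.
\end{itemize}
Your version is shorter and needs no normalization of $p(0)$. The paper's version needs only the two-parameter inequality and gets a strict inequality automatically.

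That strictness is the one point you must repair. The convex hull theorem places $Q:=\phi\ast(\ell p)/(\phi\ast\ell)$ in the \emph{closed} convex hull of $p(\D)$. That set can meet the imaginary axis and even contain $0$ (take $p(z)=(1+z)/(1-z)$). Step~3 silently replaces the closed hull by the hull, so as written it does not exclude a zero. The fix is short:
\begin{itemize}
\item $Q$ is analytic in $\D$, since $\phi\ast\ell$ has a simple zero at $0$ and no others, and the numerator also vanishes at $0$.
\item ${\rm Re}\,Q\ge 0$ in $\D$, and $Q(0)=p(0)=h'(0)$ has positive real part.
\item By the minimum principle for the harmonic function ${\rm Re}\,Q$, or by open mapping, ${\rm Re}\,Q>0$ in $\D$. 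Hence $\phi(z)\ast h(Rz)=R\,(\phi\ast\ell)(z)\,Q(z)\neq 0$ for $0<|z|<1$.
\end{itemize}
You should also delete two pieces. First, the line $\phi\ast h(Rz)=R\,(\phi\ast\ell)\ast(zp)\cdot\frac{1}{\text{suitable}}$, since the Hadamard product does not factor through a pointwise product this way. Second, the detour through the convex--starlike version of the theorem, which is unusable here because $\phi$ need not be convex.
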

\begin{pf}
    Let $0<R\leq R_1<1$. Then from the given hypothesis, we see that
    $$
     {\rm{Re}}\,\left((R_1-\beta Rz)\frac{h(Rz)}{Rz}\right)>0, \quad z\in\D.
    $$
    By the Herglotz formula (see \cite{her}), there exists $\gamma\in\partial \D$, $\gamma \ne -1$ such that 
    $$
     \frac{h(Rz)}{R_1 h'(0)}=\int\limits_{\partial \D} \frac{Rz(1+\gamma \sigma z)}{(1-\sigma z)(R_1-\beta Rz)}~ d\mu (\sigma), \quad z\in\D,
    $$
    where $\mu$ is a probability measure on $\partial \D$. Thus,
    $$
     \frac{1}{R_1h'(0)} \phi(z) \ast h(Rz)=\int\limits_{\partial \D} \phi(z)\ast\left(\frac{Rz(1+\gamma \sigma z)}{(1-\sigma z)(R_1-\beta Rz)}\right)~ d\mu (\sigma),
    $$
    which can be written as
    \begin{align*}
        &\frac{1}{R_1h'(0)} \phi(z) \ast h(Rz)\\=&\int\limits_{\partial \D} \phi(z)\ast \left((\gamma+1)Rz(R_1-\beta Rz)^{-1}(1-\sigma z)^{-1}-\gamma Rz(R_1-\beta Rz)^{-1} \right)~ d\mu(\sigma).
    \end{align*}
    Then by a little computation, we see that
    \begin{align}
        &\frac{1}{R_1h'(0)}\phi(z) \ast h(Rz)\label{P3eq2.2}\\&=\frac{1}{\beta}\phi\left(\frac{\beta Rz}{R_1}\right)\left((\gamma+1)\int\limits_{\partial \D}\frac{\phi(z)\ast Rz(R_1-\beta Rz)^{-1}(1-\sigma z)^{-1}}{\phi(z)\ast Rz(R_1-\beta Rz)^{-1}}d\mu(\sigma)-\gamma \right)\nonumber.
    \end{align}
   
  Since $\phi \in \mathcal{S}t(1/2)$, from \cite[p.~123]{sheil} we have 
   $$
    {\rm{Re}}\,\left(\frac{\phi(z)\ast z(1-\sigma_1 z)^{-1}(1-\sigma_2 z)^{-1}}{\phi(z)\ast z(1-\sigma_1 z)^{-1}}\right)>\frac{1}{2}, \quad |z|<1,
   $$
   for any pair of complex numbers $\sigma_i$, $i=1,2$ satisfying $|\sigma_i|\leq 1$, $i=1,2$. If we choose $\sigma_1=\beta R/{R_1}$ and $\sigma_2=\sigma$, then we get
   $$
    {\rm{Re}}\,\left(\frac{\phi(z)\ast z(R_1-\beta Rz)^{-1}(1-\sigma z)^{-1}}{\phi(z)\ast z(R_1-\beta Rz)^{-1}}\right)>\frac{1}{2}, \quad |z|<1.
   $$
   This implies that the expression in the second parentheses of the right hand side of \eqref{P3eq2.2} cannot vanish in $0<|z|<1$. Since $\phi(\beta Rz/{R_1})\ne 0$ $(0<|z|<1)$, the lemma follows.
\end{pf}

\begin{lem}\label{P3lem3}
    If $\phi \in \mathcal{S}t (1/2)$ and $f\in Co(A)$, $A\in(1,2]$, then for each complex number $\sigma\in\partial \D$ and $\alpha\in\partial \D$, we have
    $$
     \phi(z)\ast \left(\frac{R_0+\alpha \sigma R_2 z}{R_0-\sigma R_2 z}\,R_2zf'(R_2z)\right) \ne 0, \quad 0<|z|<1,
    $$
    where $R_0:=A-\sqrt{A^2-1}$ and $R_2$ is the least value of $r\in(0,R_0)$ satisfying the equation $\xi(r)=0$ with
    $$
     \xi(r):=\sin^{-1}\left(\frac{r}{R_0}\right)+2A\sin^{-1}(r)-\frac{\pi}{2}.
    $$
\end{lem}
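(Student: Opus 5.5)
The natural route is Lemma~\ref{P3lem2}. Take
$$
 h(z)=\frac{R_0+\alpha\sigma z}{R_0-\sigma z}\,zf'(z),
$$
so that $h(R_2z)$ is exactly the function convolved with $\phi$ in the statement, and choose $R=R_2\le R_1=R_0$ and $\beta=\sigma$. Then
$$
 (R_0-\beta z)\frac{h(z)}{z}=(R_0+\alpha\sigma z)f'(z).
$$
Once we know
$$
 {\rm Re}\,\bigl((R_0+\alpha\sigma z)f'(z)\bigr)>0 \quad \text{for } |z|<R_2,
$$
Lemma~\ref{P3lem2} gives $\phi(z)\ast h(R_2z)\neq0$ for $0<|z|<1$, which is the claim. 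The hypothesis $h(0)=0$ clearly holds. The whole task therefore reduces to an argument estimate for $f'$ on $|z|<R_2$.

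To estimate $\arg f'(z)$ I would use the Avkhadiev--Wirths characterization ${\rm Re}\,T_f>0$ from \eqref{P3eq1.3}. Writing $T_f=(1+\omega)/(1-\omega)$ with a Schwarz function $\omega$, this gives a representation of $f''/f'$. The cleaner route is the known sharp bound
$$
 \left|\arg\left(f'(z)\,\frac{(1-z)^{A+1}}{(1+z)^{A-1}}\right)\right|\quad\text{or, more directly,}\quad |\arg f'(z)|\le 2A\sin^{-1}|z| \text{ (up to a small correction)}.
$$
Concretely, integrate $zf''/f'=\frac{A+1}{2}\frac{1+z}{1-z}-1-\frac{A-1}{2}p(z)$, where ${\rm Re}\,p>0$ and $p(0)=1$. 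The term $\log\frac{1}{(1-z)^{A+1}}$ has argument at most $(A+1)\sin^{-1}r$. The Carathéodory term $\int_0^z\frac{1-p(t)}{t}\,dt$ has argument at most $(A-1)\sin^{-1} r$, using $\left|\arg\int\right|$ bounds for $\int_0^z \frac{2\omega(t)}{t(1-\omega(t))}\,dt$ via subordination to $2\log\frac1{1-z}$. Adding these gives $|\arg f'(z)|\le 2A\sin^{-1}r$, which matches the term $2A\sin^{-1}(r)$ in $\xi$. For the extremal function $f(z)=\frac{1}{2A}\bigl[\bigl(\frac{1+z}{1-z}\bigr)^A-1\bigr]$ one has $f'=(1+z)^{A-1}/(1-z)^{A+1}$, so this is plausibly sharp.

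Next, for $|z|=r<R_0$, we have $|\arg(R_0+\alpha\sigma z)|\le\sin^{-1}(r/R_0)$. Hence
$$
 |\arg\bigl((R_0+\alpha\sigma z)f'(z)\bigr)|\le \sin^{-1}(r/R_0)+2A\sin^{-1}r=\xi(r)+\tfrac{\pi}{2}.
$$
The function $\xi$ is increasing, and $\xi(0)=-\pi/2<0$. At $r=R_0$ we get $\xi(R_0)=2A\sin^{-1}R_0>0$, so the root $R_2\in(0,R_0)$ exists. For $r<R_2$ the argument is strictly less than $\pi/2$, giving the positive real part. By the minimum principle or openness, it suffices to argue on each circle $|z|=r<R_2$.

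I expect the main obstacle to be justifying the sharp bound $|\arg f'(z)|\le 2A\sin^{-1}|z|$ for $f\in Co(A)$. The subordination estimate for the term involving $p$ must be done carefully, since the arguments of the pieces must genuinely add to $2A\sin^{-1}r$. I would first try writing $f'(z)=\frac{(1+z)^{A-1}}{(1-z)^{A+1}}\cdot e^{-(A-1)\int_0^z\frac{\omega(t)}{t(1+\omega(t))}dt}$, or the analogous form. I would then bound the imaginary part of the integral using $|\omega(t)|\le|t|$, via the standard estimate $|{\rm Im}\log\frac{1}{1-\omega}|\le\sin^{-1}|t|$ applied after a Schwarz-lemma subordination. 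If that proves awkward, I would cite the known argument estimate for $f'$ from \cite{bhowmik}.
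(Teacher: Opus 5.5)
Your proposal is correct and is the paper's proof: Lemma~\ref{P3lem2} with $h(z)=\frac{R_0+\alpha\sigma z}{R_0-\sigma z}\,zf'(z)$, $R_1=R_0$, $R=R_2$ and $\beta=\sigma$ reduces the claim to ${\rm Re}\,\bigl((R_0+\alpha\sigma z)f'(z)\bigr)>0$ on $|z|<R_2$. That follows from $|\arg(R_0+\alpha\sigma z)|\le\sin^{-1}(r/R_0)$ and $|\arg f'(z)|\le 2A\sin^{-1}r$, and the paper simply quotes the latter bound from \cite[Corollary~2.5]{bhowmik}, exactly as in your fallback.

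If you derive that bound yourself, two corrections apply. The step $\int_0^z\frac{\omega(t)}{t(1-\omega(t))}\,dt\prec\log\frac{1}{1-z}$ needs Suffridge's lemma ($zq'\prec h$ with $h$ starlike and $h(0)=0$ implies $q\prec\int_0^z h(t)t^{-1}\,dt$, here with $h(z)=z/(1-z)$); a pointwise Schwarz-lemma bound on the integrand only gives $\tanh^{-1}r$. Also, the correct representation is $f'(z)=(1-z)^{-(A+1)}\exp\bigl(-(A-1)\int_0^z\frac{\omega(t)}{t(1-\omega(t))}\,dt\bigr)$ with $T_f=\frac{1+\omega}{1-\omega}$, not the closed form in your last paragraph.
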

\begin{pf}
    According to Lemma~\ref{P3lem2}, it is sufficient to prove that for each complex number $\sigma\in\partial \D$ and $\alpha\in\partial \D$, there exists a constant $\beta\in\partial \D$ such that
  \begin{equation}\label{P3eq2.3}
      {\rm{Re}}\,\left((R_0-\beta z)\frac{R_0+\alpha \sigma z}{R_0-\sigma z}f'(z)\right)>0, \quad |z|<R_2,
  \end{equation}
  where $R_0$ and $R_2$ are defined in the statement of the lemma. A straightforward calculation yields
  \begin{equation}\label{P3eq2.4}
      \left|\arg{\left((R_0+\alpha \sigma z)f'(z)\right)}\right| \leq \sin^{-1}\left(\frac{r}{R_0}\right)+\left|\arg{f'(z)}\right|, \quad |z|=r<1.
  \end{equation}
  Since $f\in Co(A)$, we have $\left|\arg{f'(z)}\right|\leq 2A \sin^{-1}(|z|)$ for $z\in\D$ (see \cite[Corollary~2.5]{bhowmik}). Thus, from \eqref{P3eq2.4} we get
  \begin{equation*}
      \left|\arg{\left((R_0+\alpha \sigma z)f'(z)\right)}\right| \leq \sin^{-1}\left(\frac{r}{R_0}\right)+2A\sin^{-1}(r), \quad |z|=r<1.
  \end{equation*}
  The right hand side of the above inequality is less than $\pi/2$ if $|z|<R_2$.  Thus,
  \begin{equation}\label{P3eq2.5}
      {\rm{Re}}\,\left((R_0+\alpha \sigma z)f'(z)\right)>0, \quad |z|<R_2.
  \end{equation}
  We now investigate the existence of $R_2$ for each $A\in(1,2]$. We see that the function $\xi$ which is defined in the statement of the lemma, is continuous on $[0,R_0]$ with
  $$
     \xi(0)=-\frac{\pi}{2}<0 \quad {\rm{and}} \quad \xi(R_0)=2A\sin^{-1}(R_0)>0.
  $$
  Therefore, by the intermediate value theorem, $\xi$ has at least one root in $(0,R_0)$. Hence, $R_2$ exists for each $A\in(1,2]$. Thus, if we consider $\beta=\sigma$, then from \eqref{P3eq2.5} we get
  $$
    {\rm{Re}}\,\left((R_0-\beta z)\frac{R_0+\alpha \sigma z}{R_0-\sigma z}f'(z)\right)>0, \quad |z|<R_2.
  $$
  Hence, there exists a constant $\beta \in \partial \D$ such that the inequality \eqref{P3eq2.3} holds. This proves the lemma.
\end{pf}

We now establish the main result of this section. In the following theorem, we determine a lower bound for the radius of convexity of the class $Co(A)\ast \mathcal{S}t(1/2)$.
\begin{thm}\label{P3thm1}
    Let $A\in(1,2]$. Then the radius of convexity of the class $Co(A)\ast\mathcal{S}t(1/2)$ is at least $R_2$, where $R_2$ is defined as in \textnormal{Lemma~\ref{P3lem3}}.
\end{thm}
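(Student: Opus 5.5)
Fix $f\in Co(A)$ and $\phi\in\mathcal{S}t(1/2)$, and put $F=f\ast\phi$. Our goal is to show that
$$
{\rm Re}\left(1+\frac{zF''(z)}{F'(z)}\right)>0 \quad\text{for } |z|<R_2 .
$$
The strategy is to write this quotient as a ratio of convolutions of the form in Lemma~\ref{P3lem1}, with $R=R_2$, $R_1=R_0$ and $g(z)=zf'(z)$. Lemma~\ref{P3lem3} is exactly hypothesis \eqref{P3eq2.1} for this choice, so Lemma~\ref{P3lem1} then gives
$$
{\rm Re}\,\frac{\phi(z)\ast\left(\frac{R_0+\sigma R_2z}{R_0-\sigma R_2z}R_2zf'(R_2z)\right)}{\phi(z)\ast R_2zf'(R_2z)}>0,\qquad z\in\D,\ \sigma\in\partial\D .
$$

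Next we express the convexity quantity of $F$ through $f$, using the identity $z(\phi\ast f)'=\phi\ast zf'$. Set $F_{R_2}(z)=F(R_2z)/R_2$, so that $zF_{R_2}'(z)=\phi(z)\ast\big(R_2 z f'(R_2z)\big)\cdot(1/R_2)$. Then
$$
1+\frac{zF_{R_2}''}{F_{R_2}'}=\frac{\phi\ast \big(z(zf'(R_2z))'\big)}{\phi\ast zf'(R_2z)} .
$$
Write $w=R_2z$. The Avkhadiev--Wirths characterization \eqref{P3eq1.3} says ${\rm Re}\,T_f>0$, so
$$
1+\frac{wf''(w)}{f'(w)}=\frac{A+1}{2}\,\frac{1+w}{1-w}-\frac{A-1}{2}\,p(w),\qquad {\rm Re}\,p>0,\ p(0)=1 .
$$
The natural plan is to represent the Carathéodory-type piece by Herglotz integrals. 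This yields an expression of the form
$$
z(zf'(R_2z))'=\int_{\partial\D}\frac{R_0+\sigma R_2z}{R_0-\sigma R_2z}\,zf'(R_2z)\,d\nu(\sigma),
$$
for a suitable probability measure $\nu$, possibly after scaling. Linearity of convolution then gives
$$
1+\frac{zF_{R_2}''}{F_{R_2}'}=\int_{\partial\D}\frac{\phi\ast\left(\frac{R_0+\sigma R_2z}{R_0-\sigma R_2z}zf'(R_2z)\right)}{\phi\ast zf'(R_2z)}\,d\nu(\sigma),
$$
and each integrand has positive real part by Lemma~\ref{P3lem1}. Hence $F$ is convex in $|z|<R_2$.

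The main obstacle is justifying the integral representation with kernels $(R_0+\sigma w)/(R_0-\sigma w)$. Concretely, one must show that
$$
{\rm Re}\,\left(1+\frac{wf''(w)}{f'(w)}\right)\ \text{is dominated by a Herglotz function of } w/R_0 ,
$$
that is, that the map $\zeta\mapsto 1+\frac{R_0\zeta f''(R_0\zeta)}{f'(R_0\zeta)}$ has positive real part on $\D$. This is precisely the statement that $R_0$ is the radius of convexity of $Co(A)$ (\cite[Corollary~2.13]{bhowmik}), and it holds on the closed subdisk by continuity.

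With that fact the Herglotz formula applied to $\zeta\mapsto 1+R_0\zeta f''(R_0\zeta)/f'(R_0\zeta)$ yields the required probability measure $\nu$ (setting $\zeta=R_2z/R_0$). The remaining verification is that the nonvanishing of $\phi\ast zf'(R_2z)$ comes from Lemma~\ref{P3lem3} with $\alpha=-1$, which also takes care of the normalization at $z=0$.
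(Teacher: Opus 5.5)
Your proposal is correct and is essentially the paper's proof. You rewrite $1+R_2z(f\ast\phi)''(R_2z)/(f\ast\phi)'(R_2z)$ as a ratio of convolutions with $zf'(R_2z)$, insert the Herglotz representation $1+wf''(w)/f'(w)=\int_{\partial\D}\frac{R_0+\sigma w}{R_0-\sigma w}\,d\nu(\sigma)$ on $|w|<R_0$ (from the radius of convexity of $Co(A)$), and conclude by applying Lemmas~\ref{P3lem1} and \ref{P3lem3} to each integrand; your opening detour through the Avkhadiev--Wirths quantity $T_f$ is never used and can be dropped.
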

\begin{pf}
    It is easy to see that for $f\in Co(A)$ and $\phi\in\mathcal{S}t(1/2)$,
    \begin{equation}\label{P3eq2.6}
       \frac{Rz(f\ast \phi)''(Rz)}{(f \ast \phi)'(Rz)}=\frac{\phi(z)\ast \big((Rz)^2f''(Rz)\big)}{\phi(z)\ast \big(Rzf'(Rz)\big)}, \quad 0<|z|<1,
    \end{equation}
    where $0<R<1$. If we define 
    $$
      F(z):=1+\frac{zf''(z)}{f'(z)}, \quad z \in \D,
    $$
    and
    $$
      G(z):=zf'(z), \quad z \in \D,
    $$
    then from \eqref{P3eq2.6} we get
    \begin{equation}\label{P3eq2.7}
       1+\frac{Rz(f\ast \phi)''(Rz)}{(f \ast \phi)'(Rz)}=\frac{\phi(z)\ast \big((FG)(Rz)\big)}{\phi(z)\ast G(Rz)}, \quad 0<|z|<1.
    \end{equation}
    Since $f\in Co(A)$ and the radius of convexity of $Co(A)$ is $A-\sqrt{A^2-1}$ (see \cite[Corollary~2.13]{bhowmik}), we have
    \begin{equation}\label{P3eq2.8}
     {\rm{Re}}\,F(z)>0, \quad |z|<R_0:=A-\sqrt{A^2-1}.
    \end{equation}
    Let $w=R_0 z$, $z\in\D$. Since $|w|<R_0$, from \eqref{P3eq2.8} we get
    $$
     {\rm{Re}}\,F(w)>0,
    $$
    which implies
    $$
     {\rm{Re}}\,F(R_0z)>0, \quad z\in\D.
    $$
    Since $F(0)=1$, by the Herglotz formula we have
    $$
     F(R_0z)=\int\limits_{\partial \D} \frac{1+\sigma z}{1-\sigma z} d\mu (\sigma), \quad z\in\D,
    $$
    where $\mu$ is a probability measure on $\partial \D$. The above equation can be rewritten as
    $$
     F(z)=\int\limits_{\partial \D} \frac{R_0+\sigma z}{R_0-\sigma z} d\mu (\sigma), \quad |z|<R_0.
    $$
    By virtue of simple computations, from the above equation we get
    \begin{equation*}
        \frac{\phi(z)\ast \big((FG)(Rz)\big)}{\phi(z)\ast G(Rz)}=\frac{1}{\phi(z)\ast G(Rz)}\int\limits_{\partial \D}\phi(z) \ast \left(\frac{R_0+\sigma Rz}{R_0-\sigma Rz}\,G(Rz)\right)d\mu (\sigma), \quad z\in\D,
    \end{equation*}
    where $0<R\leq R_0$. Thus, from \eqref{P3eq2.7} and the above equation we get
    \begin{equation}\label{P3eq2.9}
        1+\frac{Rz(f\ast \phi)''(Rz)}{(f \ast \phi)'(Rz)}=\frac{1}{\phi(z)\ast G(Rz)}\int\limits_{\partial \D}\phi(z) \ast \left(\frac{R_0+\sigma Rz}{R_0-\sigma Rz}\,G(Rz)\right)d\mu (\sigma).
    \end{equation}
    By Lemmas~\ref{P3lem1} and \ref{P3lem3}, it follows that 
    $$
     {\rm{Re}}\, \left(\frac{\phi(z) \ast \left(\frac{R_0+\sigma R_2z}{R_0-\sigma R_2z}\,G(R_2z)\right)}{\phi(z)\ast G(R_2z)}\right)>0, \quad z\in\D,
    $$
    where $R_2$ is defined as in Lemma~\ref{P3lem3}. Using this, from \eqref{P3eq2.9} we get
    $$
     {\rm{Re}}\,\left(1+\frac{R_2z(f\ast \phi)''(R_2z)}{(f \ast \phi)'(R_2z)}\right)>0, \quad z\in\D,
    $$
    i.e.
    $$
     {\rm{Re}}\,\left(1+\frac{z(f\ast \phi)''(z)}{(f \ast \phi)'(z)}\right)>0, \quad |z|<R_2.
    $$
    Therefore, $f\ast \phi$ is convex in $|z|<R_2$ for every $f\in Co(A)$ and for every $\phi\in\mathcal{S}t(1/2)$. This completes the proof of the theorem.
\end{pf}

\begin{rem}
As we have discussed in Section~\ref{P3sec1}, a previously known lower bound for the radius of convexity of $Co(A)\ast\mathcal{S}t(1/2)$ is $R':=\left(A-\sqrt{A^2-1}\right)\sqrt{2\sqrt{3}-3}$. The lower bound $R_2$ obtained in Theorem~\ref{P3thm1} satisfies the equation $\xi(r)=0$, where $\xi$ is defined as in the statement of Lemma~\ref{P3lem3}. It is easy to see that for fixed $A\in(1,2]$, 
$$
 \xi'(r)>0, \quad r\in\left(0,A-\sqrt{A^2-1}\right).
$$
Since the function $\xi$ is strictly increasing on $\left(0,A-\sqrt{A^2-1}\right)$ and
$$
 \xi(R')< 0=\xi(R_2) \quad {\rm{for}} \quad A\in[1.370794,2],
$$
we have
$$
 R'< R_2.
$$
Thus, in Theorem~\ref{P3thm1} we obtained an improved lower bound $R_2$ for the radius of convexity of $Co(A)\ast\mathcal{S}t(1/2)$ for $A\in[1.370794,2]$. In Table~\ref{P3tab1}, we list the values of $R'$ and $R_2$ for different values of $A\in[1.370794,2]$.
\captionsetup[table]{skip=8pt, width=\textwidth, justification=centering}
    \begin{table}[htbp]
    \centering
    \renewcommand{\arraystretch}{1.2}
 	\begin{tabular}{{c@{\hspace{1cm}}c@{\hspace{1cm}}c}}
    \toprule
 		  \textbf{Values of $A$} & \textbf{Values of $R'$} & \textbf{Values of $R_2$}\\
        \midrule
              $1.370794$ & $0.295119812$ & $0.295119816$\\
              $1.5$ & $0.260214360$ & $0.264920163$\\
              $1.7$ & $0.221561105$ & $0.229515864$\\
              $1.9$ & $0.193781954$ & $0.202920564$\\
 			 $2$ &   $0.182540398$ & $0.191909667$\\
        \bottomrule
 	\end{tabular}
    \caption{Values of $R'$ and $R_2$ for different values of $A\in[1.370794,2]$.}
    \label{P3tab1}
 	\end{table}
\end{rem}

\section{Radius of convexity of $\mathcal{S}\ast\mathcal{S}t(1/2)$}

In this section, we explore the convolution $\phi \ast f$, where $\phi \in \mathcal{S}$ and $f\in \mathcal{S}t(1/2)$. Here, we focus on determining the radius of convexity of the class $\mathcal{S}\ast\mathcal{S}t(1/2)$. Since $Co(A)\subsetneq\mathcal{S}$, we see that this result is more general than that was established in Theorem~\ref{P3thm1}. To proceed, we first establish an essential lemma as before.
\begin{lem}\label{P3lem4}
    If $\phi \in \mathcal{S}t (1/2)$ and $f\in \mathcal{S}$, then for each complex number $\sigma\in\partial \D$ and $\alpha\in\partial \D$, we have
    $$
     \phi(z)\ast \left(\frac{R_3+\alpha \sigma R_4 z}{R_3-\sigma R_4 z}\,R_4zf'(R_4z)\right) \ne 0, \quad 0<|z|<1,
    $$
    where $R_3:=2-\sqrt{3}$ and $R_4$ $(\approx 0.19191)$ is the smallest positive root of the equation 
    $$
     \sin^{-1}\left(\frac{r}{R_3}\right)+4\sin^{-1}(r)-\frac{\pi}{2}=0.
    $$
\end{lem}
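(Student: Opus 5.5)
The argument will mirror the proof of Lemma~\ref{P3lem3}, with $Co(A)$ replaced by $\mathcal{S}$ and $R_0$ replaced by $R_3=2-\sqrt{3}$. We apply Lemma~\ref{P3lem2} with $R=R_4$, $R_1=R_3$, and
$$
h(z)=\frac{R_3+\alpha\sigma z}{R_3-\sigma z}\,zf'(z),
$$
so that $h(R_4z)$ is exactly the function convolved with $\phi$ in the statement. Note that $h(0)=0$ and $h$ is analytic in $|z|<R_3$. Lemma~\ref{P3lem2} is stated for $h$ analytic in $\D$, but its proof only uses $h$ on $|z|\le R$ through $h(Rz)$, so analyticity in $|z|<R_3$ suffices since $R_4<R_3$. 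It therefore suffices to exhibit $\beta\in\partial\D$ with
$$
{\rm Re}\,\left((R_3-\beta z)\frac{R_3+\alpha\sigma z}{R_3-\sigma z}f'(z)\right)>0,\quad |z|<R_4 .
$$
We choose $\beta=\sigma$, which cancels the denominator. The condition then reduces to ${\rm Re}\,\big((R_3+\alpha\sigma z)f'(z)\big)>0$ for $|z|<R_4$.

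To verify this reduced condition we bound arguments. For $|z|=r<R_3$ we have $|\alpha\sigma z/R_3|=r/R_3<1$, so $|\arg(R_3+\alpha\sigma z)|\le \sin^{-1}(r/R_3)$. For $f\in\mathcal{S}$, the classical sharp rotation theorem (Grunsky) gives
$$
|\arg f'(z)|\le 4\sin^{-1}(r)\quad\text{for } r\le 1/\sqrt2 .
$$
This range is enough, since $R_4<R_3<1/\sqrt2$. Combining the two bounds,
$$
|\arg((R_3+\alpha\sigma z)f'(z))|\le \sin^{-1}(r/R_3)+4\sin^{-1}(r)=:\eta(r)+\tfrac{\pi}{2},
$$
where $\eta(r)=\sin^{-1}(r/R_3)+4\sin^{-1}(r)-\pi/2$. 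Thus the real part is positive whenever $\eta(r)<0$.

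It remains to see that $\eta(r)<0$ on $[0,R_4)$. The function $\eta$ is continuous and strictly increasing on $[0,R_3]$, with $\eta(0)=-\pi/2$ and $\eta(R_3)=4\sin^{-1}(R_3)>0$. Hence it has a unique root in $(0,R_3)$, which is the smallest positive root $R_4$, and numerically $R_4\approx0.19191$. Therefore $\eta(r)<0$ for all $r<R_4$, and Lemma~\ref{P3lem2} gives the nonvanishing claim for every $\sigma,\alpha\in\partial\D$.

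The main obstacle is justifying the sharp bound $|\arg f'(z)|\le 4\sin^{-1}r$ on the relevant range for the whole class $\mathcal{S}$. We will invoke the Grunsky rotation theorem for it rather than prove it. The numerical identification of $R_4$ is routine.
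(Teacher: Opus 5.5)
Your proposal is correct and follows the paper's proof essentially step for step: you reduce via Lemma~\ref{P3lem2} (with $R_1=R_3$, $R=R_4$), take $\beta=\sigma$ to cancel the denominator, and combine $|\arg(R_3+\alpha\sigma z)|\le\sin^{-1}(r/R_3)$ with Grunsky's rotation bound $|\arg f'(z)|\le 4\sin^{-1}r$ for $r\le 1/\sqrt{2}$. Your added remarks fill in small points the paper leaves implicit: $h$ need only be analytic in $|z|<R_3$ because $R_4<R_3$, and $\eta$ is strictly increasing, so $R_4$ is its unique root in $(0,R_3)$.
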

\begin{pf}
  According to Lemma~\ref{P3lem2}, it is sufficient to prove that for each complex number $\sigma\in\partial \D$ and $\alpha\in\partial \D$, there exists a constant $\beta\in\partial \D$ such that
  \begin{equation}\label{P3eq3.1}
      {\rm{Re}}\,\left((R_3-\beta z)\frac{R_3+\alpha \sigma z}{R_3-\sigma z}f'(z)\right)>0, \quad |z|<R_4,
  \end{equation}
  where $R_3$ and $R_4$ are defined in the statement of the lemma. A straightforward calculation yields
  \begin{equation}\label{P3eq3.2}
      \left|\arg{\left((R_3+\alpha \sigma z)f'(z)\right)}\right| \leq \sin^{-1}\left(\frac{r}{R_3}\right)+\left|\arg{f'(z)}\right|, \quad |z|=r<1.
  \end{equation}
  Since $f\in\mathcal{S}$, we have $\left|\arg{f'(z)}\right|\leq 4 \sin^{-1}(|z|)$ for $|z|\leq 1/\sqrt{2}$ (see \cite[Theorem~3.2.6]{graham}). Thus, from \eqref{P3eq3.2} we get
  \begin{equation*}
      \left|\arg{\left((R_3+\alpha \sigma z)f'(z)\right)}\right| \leq \sin^{-1}\left(\frac{r}{R_3}\right)+4\sin^{-1}(r), \quad |z|=r<\frac{1}{\sqrt{2}}.
  \end{equation*}
  The right hand side of the above inequality is less than $\pi/2$ if $|z|<R_4\approx 0.19191$. Thus,
  $$
   {\rm{Re}}\,\left((R_3+\alpha \sigma z)f'(z)\right)>0, \quad |z|<\min\left\{R_4,\frac{1}{\sqrt{2}}\right\}=R_4.
  $$
  Therefore, if we consider $\beta=\sigma$, then from the above inequality we get
    $$
      {\rm{Re}}\,\left((R_3-\beta z)\frac{R_3+\alpha \sigma z}{R_3-\sigma z}f'(z)\right)>0, \quad |z|<R_4.
    $$
    Hence, there exists a constant $\beta \in \partial \D$ such that the inequality \eqref{P3eq3.1} holds. This proves the lemma.
\end{pf}

We now present the main result of this section.
\begin{thm}\label{P3thm2}
    The radius of convexity of the class $\mathcal{S}\ast\mathcal{S}t(1/2)$ is at least $R_4\approx 0.19191$, where $R_4$ is defined as in \textnormal{Lemma~\ref{P3lem4}}.
\end{thm}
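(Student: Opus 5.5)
The plan is to repeat the argument of Theorem~\ref{P3thm1} almost word for word, with $Co(A)$ replaced by $\mathcal{S}$, $R_0$ by $R_3=2-\sqrt{3}$, and Lemma~\ref{P3lem3} by Lemma~\ref{P3lem4}. Fix $f\in\mathcal{S}$ and $\phi\in\mathcal{S}t(1/2)$. Since convolution is commutative, $f\ast\phi$ is a general member of $\mathcal{S}\ast\mathcal{S}t(1/2)$. Put $F(z)=1+zf''(z)/f'(z)$ and $G(z)=zf'(z)$. Exactly as in \eqref{P3eq2.7}, for $0<R<1$ and $0<|z|<1$ we have
$$
1+\frac{Rz(f\ast\phi)''(Rz)}{(f\ast\phi)'(Rz)}=\frac{\phi(z)\ast\big((FG)(Rz)\big)}{\phi(z)\ast G(Rz)}.
$$
This uses only the identity $z(f\ast\phi)'=\phi\ast zf'$ applied twice.

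Next I will use Nevanlinna's theorem that the radius of convexity of $\mathcal{S}$ is $R_3=2-\sqrt3$. It gives ${\rm Re}\,F(z)>0$ for $|z|<R_3$. Since $F(0)=1$, the Herglotz formula applied to $F(R_3z)$ yields
$$
F(z)=\int_{\partial\D}\frac{R_3+\sigma z}{R_3-\sigma z}\,d\mu(\sigma),\qquad |z|<R_3,
$$
for some probability measure $\mu$. Substituting this into the identity, with $R=R_4<R_3$, I can move $\phi\ast$ inside the integral by linearity. The convolution with $\phi$ acts on coefficients, and the integral converges uniformly on compacta. This gives
$$
1+\frac{R_4z(f\ast\phi)''(R_4z)}{(f\ast\phi)'(R_4z)}=\int_{\partial\D}\frac{\phi(z)\ast\Big(\frac{R_3+\sigma R_4z}{R_3-\sigma R_4z}\,G(R_4z)\Big)}{\phi(z)\ast G(R_4z)}\,d\mu(\sigma).
$$

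Lemma~\ref{P3lem4} supplies the nonvanishing hypothesis \eqref{P3eq2.1} of Lemma~\ref{P3lem1}, with $\phi$, $g=G$, $R=R_4$ and $R_1=R_3$; note that $G(0)=0$, $G'(0)=1$, $\phi'(0)=1$ and $R_4\le R_3$. Lemma~\ref{P3lem1} then shows that each integrand has positive real part on $\D$, and hence so does the average. The nonvanishing of $\phi(z)\ast G(R_4z)$ in $0<|z|<1$ (the case $\alpha=-1$) ensures that the quotient is analytic, and at $z=0$ its value is $1$. Consequently ${\rm Re}\,\big(1+w(f\ast\phi)''(w)/(f\ast\phi)'(w)\big)>0$ for $|w|<R_4$. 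Also $(f\ast\phi)'(0)=1$, and $(f\ast\phi)'\neq0$ in $|w|<R_4$ because $R_4z(f\ast\phi)'(R_4z)=\phi(z)\ast G(R_4z)\ne0$ for $0<|z|<1$. So $f\ast\phi$ is convex in $|z|<R_4$.

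There is no real obstacle left, since the analytic work is in Lemma~\ref{P3lem4}. The step needing the most care is the bookkeeping in applying Lemma~\ref{P3lem1}: the radius $R_1=R_3$ in the Herglotz kernel must match the lemma's parameters, and Lemma~\ref{P3lem4} must indeed give nonvanishing for all $\alpha,\sigma\in\partial\D$, including the degenerate case $\alpha=-1$.
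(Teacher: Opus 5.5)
Your proposal is correct and follows essentially the same route as the paper: the convolution identity expressing $1+z(f\ast\phi)''/(f\ast\phi)'$ as $\phi\ast(FG)/\phi\ast G$, the Herglotz representation of $F$ on $|z|<2-\sqrt3$ obtained from Nevanlinna's theorem, and then Lemma~\ref{P3lem1} fed by Lemma~\ref{P3lem4} with $R_1=R_3$ and $R=R_4$. Your extra justifications make explicit what the paper leaves as ``simple computations.'' These are moving $\phi\ast$ inside the integral, and showing that $(f\ast\phi)'\neq 0$ in $|z|<R_4$ via the case $\alpha=-1$.
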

\begin{pf}
    It is easy to see that for $f\in\mathcal{S}$ and $\phi \in \mathcal{S}t(1/2)$,
    \begin{equation}\label{P3eq3.3}
       1+\frac{Rz(f\ast \phi)''(Rz)}{(f \ast \phi)'(Rz)}=\frac{\phi(z)\ast \big((FG)(Rz)\big)}{\phi(z)\ast G(Rz)}, \quad 0<|z|<1,
    \end{equation}
    where $0<R<1$, 
    $$
      F(z):=1+\frac{zf''(z)}{f'(z)}, \quad z \in \D,
    $$
    and
    $$
      G(z):=zf'(z), \quad z \in \D.
    $$
    Since $f\in\mathcal{S}$ and the radius of convexity of $\mathcal{S}$ is $2-\sqrt{3}$ (see \cite[Theorem~2.2.22]{graham}), we have
    \begin{equation}\label{P3eq3.4}
     {\rm{Re}}\,F(z)>0, \quad |z|<R_3:=2-\sqrt{3}.
    \end{equation}
    Let $w=R_3 z$, $z\in\D$. Since $|w|<R_3$, from \eqref{P3eq3.4} we get
    $$
     {\rm{Re}}\,F(w)>0,
    $$
    which implies
    $$
     {\rm{Re}}\,F(R_3z)>0, \quad z\in\D.
    $$
    Since $F(0)=1$, by the Herglotz formula we have
    $$
     F(R_3z)=\int\limits_{\partial \D} \frac{1+\sigma z}{1-\sigma z} d\mu (\sigma), \quad z\in\D,
    $$
    where $\mu$ is a probability measure on $\partial \D$. The above equation can be rewritten as
    $$
     F(z)=\int\limits_{\partial \D} \frac{R_3+\sigma z}{R_3-\sigma z} d\mu (\sigma), \quad |z|<R_3.
    $$
    By virtue of simple computations, from the above equation we get
    \begin{equation*}
        \frac{\phi(z)\ast \big((FG)(Rz)\big)}{\phi(z)\ast G(Rz)}=\frac{1}{\phi(z)\ast G(Rz)}\int\limits_{\partial \D}\phi(z) \ast \left(\frac{R_3+\sigma Rz}{R_3-\sigma Rz}\,G(Rz)\right)d\mu (\sigma), \quad z\in\D,
    \end{equation*}
    where $0<R\leq R_3$. Thus, from \eqref{P3eq3.3} and the above equation we get
    \begin{equation}\label{P3eq3.5}
        1+\frac{Rz(f\ast \phi)''(Rz)}{(f \ast \phi)'(Rz)}=\frac{1}{\phi(z)\ast G(Rz)}\int\limits_{\partial \D}\phi(z) \ast \left(\frac{R_3+\sigma Rz}{R_3-\sigma Rz}\,G(Rz)\right)d\mu (\sigma).
    \end{equation}
    By Lemmas~\ref{P3lem1} and \ref{P3lem4}, it follows that 
    $$
     {\rm{Re}}\, \left(\frac{\phi(z) \ast \left(\frac{R_3+\sigma R_4z}{R_3-\sigma R_4z}\,G(R_4z)\right)}{\phi(z)\ast G(R_4z)}\right)>0, \quad z\in\D,
    $$
    where $R_4$ is defined in Lemma~\ref{P3lem4}. Using this, from \eqref{P3eq3.5} we get
    $$
     {\rm{Re}}\,\left(1+\frac{R_4z(f\ast \phi)''(R_4z)}{(f \ast \phi)'(R_4z)}\right)>0, \quad z\in\D,
    $$
    i.e.
    $$
     {\rm{Re}}\,\left(1+\frac{z(f\ast \phi)''(z)}{(f \ast \phi)'(z)}\right)>0, \quad |z|<R_4\approx 0.19191.
    $$
    Therefore, $f\ast \phi$ is convex in $|z|<R_4\approx 0.19191$ for every $f\in\mathcal{S}$ and for every $\phi\in\mathcal{S}t(1/2)$. This completes the proof of the theorem.
\end{pf}

\begin{rem}
    If we set $A=2$, then the lower bound for the radius of convexity of $Co(A)\ast\mathcal{S}t(1/2)$ obtained in Theorem~\ref{P3thm1} reduces to the least positive root of the equation
    $$
      sin^{-1}\left(\frac{r}{2-\sqrt{3}}\right)+4sin^{-1}\left(r\right)-\frac{\pi}{2}=0,
    $$
    which coincides with the corresponding lower bound for the radius of convexity of $\mathcal{S}\ast\mathcal{S}t(1/2)$ obtained in Theorem~\ref{P3thm2}.

\end{rem}

\vspace{1cm}
\noindent{\bf Statements and  Declarations:}\\
 \noindent{\bf Competing interests:}\, The authors have no competing interests to declare that are relevant to the content of this article.

\end{document}